\documentclass[a4paper,12pt]{amsart}
\usepackage{amsmath,amsfonts,amssymb,amsthm,latexsym,amscd}
\usepackage[dvips]{graphicx}
\usepackage[T1]{fontenc}

\usepackage{parskip}

\newcommand{\B}[1]{{\mathbf #1}}

\newtheorem{thm}{Theorem}[section]
\newtheorem{thm*}{Theorem}
\newtheorem{lem}[thm]{Lemma}
\newtheorem{prop}[thm]{Proposition}
\newtheorem{cor}[thm]{Corollary}

\newtheorem*{q*}{Question}

\theoremstyle{definition}

\newtheorem{rem}[thm]{Remark}
\newtheorem*{rem*}{Remark}
\newtheorem*{rems*}{Remarks}
\newtheorem*{cor*}{Corollary}

\def\B{\mathbf}



\newcommand\Diff{\operatorname{Diff}}

\newcommand\Ham{\operatorname{Ham}}

\newcommand\Aut{\operatorname{Aut}}

\newcommand\vol{\operatorname{vol}}
\newcommand\OP{\operatorname}


\def\Diff{\operatorname{Diff}}

\def\Id{\operatorname{Id}}

\def\a{\alpha}
\def\b{\beta}
\def\area{\operatorname{area}}

\def\ev{\operatorname{ev}}

\def\g{\gamma}
\def\o{\omega}

\begin{document}

\title[The autonomous metric on $\Ham(\B\Sigma_g)$]{On the autonomous metric on groups of Hamiltonian diffeomorphisms of closed hyperbolic surfaces}
\author{Michael Brandenbursky}

\keywords{groups of Hamiltonian diffeomorphisms, mapping class groups, quasi-morphisms, bi-invariant metrics}
\subjclass[2010]{Primary 53; Secondary 57}

\begin{abstract}
Let $\B\Sigma_g$ be a closed hyperbolic surface of genus $g$ and let $\Ham(\B\Sigma_g)$ be the group of Hamiltonian diffeomorphisms of $\B\Sigma_g$. The most natural word metric on this group is the autonomous metric. It has many interesting properties, most important of which is the bi-invariance of this metric. In this work we show that $\Ham(\B\Sigma_g)$ is unbounded with respect to this metric.
\end{abstract}

\maketitle

\section{Introduction and main result}

\label{S:intro}

Let $\B\Sigma_g$ be a closed hyperbolic surface of genus $g$ and let $\o$ be an area form on $\B\Sigma_g$. For every smooth normalized function
$H\colon \B\Sigma_g\to \B R$, i.e. $H$ has a zero mean with respect to $\o$, there exists a unique vector field $X_H$ which satisfies
$$
dH(\cdot)=\o(X_H,\cdot).
$$
It is easy to see that $X_H$ is tangent to the level sets of $H$. Let $h$ be the
time-one map of the flow $h_t$ generated by $X_H$. The diffeomorphism $h$ is
area-preserving and every diffeomorphism arising in this way is called
{\em autonomous}. Such a diffeomorphism is relatively easy to understand
in terms of its generating function.

Denote by $\Ham(\B\Sigma_g)$ the group of Hamiltonian diffeomorphisms of $\B\Sigma_g$. It follows from results of Banyaga that every Hamiltonian diffeomorphism is a composition of finitely many autonomous diffeomorphisms \cite{Ba}.
We define the {\em autonomous norm}
on $\Ham(\B\Sigma_g)$ by
$$
\|f\|_{\OP{Aut}}:=
\min\left\{m\in \B N\,|\,f=h_1\cdots h_m
\text{ where each } h_i \text{ is autonomous}\right\}.
$$
The associated metric is defined by
${\bf d}_{\OP{Aut}}(f,g):=\|fg^{-1}\|_{\OP{Aut}}$.  Since the set of autonomous
diffeomorphisms is invariant under conjugation, the autonomous metric is
bi-invariant. Our main result is the following

\begin{thm*}\label{T:main}
The metric group $(\Ham(\B\Sigma_g), \B d_{\OP{Aut}})$ is unbounded.
\end{thm*}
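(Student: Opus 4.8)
\medskip
\noindent\emph{Proof idea.} The plan is to produce a homogeneous quasi-morphism $\Phi\colon\Ham(\B\Sigma_g)\to\R$ which \emph{vanishes on the set $\mathcal A$ of autonomous diffeomorphisms} and is \emph{not identically zero}. This suffices: a non-zero homogeneous quasi-morphism is automatically unbounded --- if $\Phi(f_0)\neq 0$ then $\Phi(f_0^N)=N\Phi(f_0)\to\infty$ --- while if $D$ denotes its defect, then any factorization $f=h_1\cdots h_m$ with all $h_i\in\mathcal A$ satisfies $|\Phi(f)|=|\Phi(h_1\cdots h_m)|\le(m-1)D$, whence
\[
\|f\|_{\OP{Aut}}\ \ge\ 1+\frac{|\Phi(f)|}{D}\,.
\]
Thus the theorem reduces to the construction of such a $\Phi$.

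I would build $\Phi$ by the one-point Gambaudo--Ghys construction, exploiting the hyperbolic geometry of $\B\Sigma_g$. Fix a hyperbolic metric. Because $g\ge 2$ the group $\Ham(\B\Sigma_g)$ is simply connected: $\Diff_0(\B\Sigma_g)$ is contractible, hence so is $\Symp_0(\B\Sigma_g)$, the flux map $\Flux\colon\Symp_0(\B\Sigma_g)\to H^1(\B\Sigma_g;\R)$ is then an honest homomorphism and a fibration with contractible total space, and its fibre $\Ham(\B\Sigma_g)=\Ker\Flux$ is weakly contractible. Hence, for $f\in\Ham(\B\Sigma_g)$ and $z\in\B\Sigma_g$, closing up the path $t\mapsto f_t(z)$ --- $\{f_t\}$ any Hamiltonian isotopy from the identity to $f$ --- by the unique geodesic from $f(z)$ back to $z$ gives a free homotopy class $\gamma(f;z)\in\pi_1(\B\Sigma_g)$ that is independent of the isotopy. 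Now choose a homogeneous quasi-morphism $\psi$ on $\pi_1(\B\Sigma_g)$ which vanishes on every simple closed curve but is non-zero on some fixed non-simple, null-homologous class $\alpha_0$. Producing such a $\psi$ is precisely where the hypothesis $g\ge 2$ is used: $\pi_1(\B\Sigma_g)$ is a non-elementary hyperbolic group, hence carries an infinite-dimensional space of homogeneous quasi-morphisms, and since simple closed geodesics are thin (Birman--Series) a Brooks counting quasi-morphism associated to a carefully chosen word can be made to annihilate all simple closed curves while detecting $\alpha_0$. Put $\widetilde\Phi(f):=\int_{\B\Sigma_g}\psi(\gamma(f;z))\,\o$; this integral converges (for fixed $f$ the word length of $\gamma(f;z)$ is bounded uniformly in $z$), and $\widetilde\Phi$ is a quasi-morphism: comparing the isotopy of $fg$ with those of $f$ and $g$, and using that $g$ preserves $\o$, one finds the defect of $\widetilde\Phi$ to be at most a constant multiple of $D(\psi)\cdot\operatorname{diam}(\B\Sigma_g)\cdot\operatorname{vol}(\B\Sigma_g)$. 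Let $\Phi$ be the homogenization of $\widetilde\Phi$. Its non-triviality is the easy half: evaluating $\widetilde\Phi$ on a Hamiltonian realization $f_0$ of the point-pushing map dragging a small disc once around $\alpha_0$ (such a realization lies in $\Ham(\B\Sigma_g)$ because $\alpha_0$ is null-homologous), one sees that $f_0^N$ drags the points of its support around $\alpha_0$ up to $N$ times, so $\gamma(f_0^N;z)$ is freely homotopic to $\alpha_0$ to a power growing linearly in $N$ on a set of positive measure; hence $\widetilde\Phi(f_0^N)$ grows linearly and $\Phi(f_0)\neq 0$.

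The technical heart is that $\Phi$ \emph{vanishes on $\mathcal A$}. Let $h\in\mathcal A$ be generated by a Hamiltonian $H$; then $h^N$ is generated by $NH$, and $\Phi(h)=\lim_N\widetilde\Phi(h^N)/N$, so it is enough to bound $\widetilde\Phi(h^N)$ independently of $N$. For almost every $z$ the point $z$ either lies in a region where the flow is stationary (so $\gamma(h^N;z)$ is trivial) or is a regular point of $H$ lying on a regular level component $\sigma_z$, an embedded circle on which the generating flow is zero-free; in the latter case the orbit of $z$ is all of $\sigma_z$, run periodically with some period $T(z)$, and over $[0,N]$ it winds around $\sigma_z$ exactly $\lfloor N/T(z)\rfloor$ times and then traverses a leftover arc. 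Consequently $\gamma(h^N;z)=\sigma_z^{\lfloor N/T(z)\rfloor}\cdot\beta_N(z)$, where $\beta_N(z)$ is the homotopy class of the leftover arc of $\sigma_z$ followed by the closing geodesic --- a loop of length at most $\operatorname{length}(\sigma_z)+\operatorname{diam}(\B\Sigma_g)$, and crucially \emph{independent of $N$}. Since $\sigma_z$ is a simple closed curve, $\psi(\sigma_z^{\lfloor N/T(z)\rfloor})=0$, so $|\psi(\gamma(h^N;z))|\le C\,D(\psi)\,(1+\operatorname{length}(\sigma_z))$ for a constant $C$ depending only on the metric, uniformly in $N$; integrating against $\o$, and using the coarea formula (which controls $\int_{\B\Sigma_g}\operatorname{length}(\sigma_z)\,\o$ when $H$ is Morse), bounds $|\widetilde\Phi(h^N)|$ uniformly in $N$, so $\Phi(h)=0$. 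I expect the main obstacle to be making this argument rigorous for \emph{every} autonomous $h$: controlling the leftover loops $\beta_N(z)$ uniformly as $z$ approaches the critical set of $H$ (where $\sigma_z$ degenerates and $T(z)\to\infty$), keeping $\int_{\B\Sigma_g}\operatorname{length}(\sigma_z)\,\o$ finite, and extending from Morse to arbitrary smooth Hamiltonians --- the natural remedies being a density argument reducing to the Morse case via $C^1$-continuity of $\widetilde\Phi$, or a sharper geometric estimate on the $\beta_N(z)$ near the critical set.
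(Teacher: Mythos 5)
Your overall strategy coincides with the paper's: build a nonzero homogeneous quasi-morphism on $\Ham(\B\Sigma_g)$ via the Polterovich (Gambaudo--Ghys type) integral construction, show it vanishes on autonomous diffeomorphisms because for a Morse Hamiltonian almost every orbit winds periodically around a regular level circle (so $[h^N_x]$ is a power of a simple loop times uniformly bounded junk), pass from Morse to general Hamiltonians by $C^1$-continuity, and conclude unboundedness from the standard defect estimate. All of that matches Sections 2--3 of the paper, and those parts of your sketch are sound (your worries about $\int\operatorname{length}(\sigma_z)\,\o$ and the degeneration near the critical set are not really needed: the paper only uses the pointwise a.e.\ statement $\lim_p|\psi([h^p_x])|/p=0$ inside the defining integral of the homogenized quasi-morphism).

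The genuine gap is the input $\psi$. You need a nontrivial homogeneous quasi-morphism on $\pi_1(\B\Sigma_g)$ vanishing on \emph{every} conjugacy class represented by a simple closed curve --- an infinite family of independent linear conditions --- and you dispose of this with one sentence: ``a Brooks counting quasi-morphism associated to a carefully chosen word can be made to annihilate all simple closed curves,'' citing Birman--Series. The Birman--Series theorem is a statement about the geometric sparseness of the union of simple geodesics in the surface; it does not produce a word $w$ for which the homogenized Brooks quasi-morphism vanishes on all simple classes while remaining nonzero, and no such construction is carried out. This existence statement is precisely the crux of the whole problem, and it is exactly what the paper's two key steps are designed to circumvent: Proposition 3.1 embeds $\pi_1(\B\Sigma_g)$ into $\mathcal{MCG}_g^1$ via the Birman exact sequence and observes that all the relevant loops $[\g_x]$ become conjugate, \emph{in the larger group}, to a point-push $t_{\delta_+}\circ t_{\delta_-}^{-1}$ for $\delta$ in a \emph{finite} list of topological types of simple closed curves; then Bestvina--Fujiwara supplies infinitely many homogeneous quasi-morphisms on $\mathcal{MCG}_g^1$ that restrict nontrivially to $\pi_1(\B\Sigma_g)$ and vanish on that finite set. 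Without this (or an actual combinatorial construction of your $\psi$), your proof does not close. A secondary, repairable weakness: your non-triviality argument via a Hamiltonian finger-push around a non-simple $\alpha_0$ requires controlling the winding of \emph{all} points of the (immersed) supporting tube, not just those in the pushed disc; the paper avoids this entirely by proving injectivity of the Polterovich map using Banyaga's theorem that $\Ham(\B\Sigma_g)$ is the commutator subgroup of $\Diff_0(\B\Sigma_g,\area)$ together with the fact that a homogeneous quasi-morphism vanishing on the commutator subgroup is a homomorphism.
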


\begin{rem}
We must notice that Theorem \ref{T:main} may be seen as a corollary from more general results of the author, which will appear in a subsequent paper \cite{B-preparation}. However, we think that the proof given in this paper is much easier than the one presented in \cite{B-preparation}. 
\end{rem}

\section{Preliminaries}
\label{sec-preliminaries}
In this paper we consider only normalized functions $\B\Sigma_g\to\B R$.

\subsection{Quasi-morphisms}
\label{ssec-qm}

A function $\psi\colon G \to~\B R$ from a group $G$ to the reals is called a {\em quasi-morphism}
if there exists a real number $A\geq 0$ such that
$$
|\psi(gg') - \psi(g) - \psi(g')|\leq C
$$
for all $g,g'\in G$. The infimum of such $C$'s is called the
\emph{defect} of $\psi$ and is denoted by $D_\psi$.
If $\psi(g^n)=n\psi(g)$ for all $n\in \B Z$
and $g\in G$ then $\psi$ is called \emph{homogeneous}. Any
quasi-morphism $\psi$ can be homogenized by setting
$$\overline{\psi} (g) := \lim_{p\to +\infty} \frac{\psi (g^p)}{p}.$$
The vector space of homogeneous quasi-morphisms on $G$
is denoted by $Q(G)$. The space of homogeneous quasi-morphisms on $G$ modulo the space of homomorphisms on $G$ is denoted by $\widehat{Q}(G)$. For more information about quasi-morphisms and their connections to different brunches of mathematics, see \cite{Calegari}.

\subsection{Polterovich construction}
\label{ssec-polt-construction}

Let $\B M$ be an oriented closed Riemannian manifold equipped with a Riemannian volume form, and denote by $\Diff_0(\B M, \vol)$ the identity component of the group of volume preserving diffeomorphisms of $\B M$. Let us describe a construction, due to L. Polterovich, of quasi-morphisms on
$\Diff_0(\B M, \vol)$.

Let $z\in \B M$. Suppose that the group $\pi_1(\B M,z)$ has a trivial center and it admits a \emph{non-trivial} homogeneous quasi-morphism $$\psi\colon\pi_1(\B M,z)\to\B R.$$
For each $x\in\B M$ let us choose a short geodesic path from $x$ to $z$. In \cite{P} Polterovich constructed the induced \emph{non-trivial} homogeneous quasi-morphism $\overline{\Psi}$ on $\Diff_0(\B M, \vol)$ as follows:\\
For each $x\in\B M$ and an isotopy $\{g_t\}_{t\in[0,1]}$ between $\Id$ and $g$, let $g_x$ be a closed loop in $\B M$ which is a concatenation of a geodesic path from $z$ to $x$, the path $g_t(x)$ and a described above geodesic path from $g(x)$ to~$z$. Denote by $[g_x]$ the corresponding element in $\pi_1(\B M,z)$ and set
$$\Psi(g):=\int\limits_{\B M} \psi([g_x])\vol\qquad\qquad
\overline{\Psi}(g):=\lim\limits_{p\to\infty}\frac{1}{p}\int\limits_{\B M} \psi([(g^p)_x])\vol.$$
The maps $\Psi$ and $\overline{\Psi}$ are well-defined quasi-morphisms because every diffeomorphism in $\Diff_0(\B M, \vol)$ is volume-preser\-ving. They do not depend on a choice of the path $\{g_t\}$ because the evaluation map
$$\ev\colon\Diff_0(\B M, \vol)\to \B M,$$
where $\ev(g)=g(z)$, induces a map between $\pi_1(\Diff_0(\B M, \vol),\Id)$ and $\pi_1(\B M,z)$ whose image lies in the center of $\pi_1(\B M,z)$, which is trivial by our assumption. In addition, the quasi-morphism $\overline{\Psi}$ neither depends on the choice of a family of geodesic paths, nor on the choice of a base point $z$. For more details see \cite{P}. We abuse the notation and sometimes write $\pi_1(\B M)$ instead of
$\pi_1(\B M, z)$.

Denote by $\mathfrak{Polt}_{\B M}$ the linear map, induced by Polterovich construction, from
$\widehat{Q}(\pi_1(\B M))\to \widehat{Q}(\Diff_0(\B M, \vol))$. Since every homogeneous quasi-morphism on $\pi_1(\B M)$, which is not a homomorphism, defines a homogeneous quasi-morphism on $\Diff_0(\B M, \vol)$, which is also not a homomorphism, we obtain the following

\begin{cor}\label{cor:Polt-map-injectivity}
The linear map $\mathfrak{Polt}_{\B M}\colon \widehat{Q}(\pi_1(\B M))\to \widehat{Q}(\Diff_0(\B M, \vol))$ is injective.
\end{cor}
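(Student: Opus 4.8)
The plan is to read the statement as a formal consequence of the sentence immediately preceding it — the substance of which is Polterovich's work \cite{P} — and then to indicate where the only non-formal ingredient sits. Recall that $\mathfrak{Polt}_{\B M}$ is, by its very definition, a linear map $\widehat{Q}(\pi_1(\B M))\to\widehat{Q}(\Diff_0(\B M,\vol))$, so it is injective precisely when its kernel is trivial. I would therefore take a class $[\psi]\in\widehat{Q}(\pi_1(\B M))$ in the kernel, represent it by a homogeneous quasi-morphism $\psi\colon\pi_1(\B M,z)\to\B R$, and note that the homogeneous quasi-morphism $\overline{\Psi}$ produced from $\psi$ by the Polterovich construction represents $\mathfrak{Polt}_{\B M}([\psi])=0$ in $\widehat{Q}(\Diff_0(\B M,\vol))$; since $\widehat{Q}$ is the quotient by homomorphisms, this means $\overline{\Psi}$ is itself a homomorphism on $\Diff_0(\B M,\vol)$. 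Applying the contrapositive of the preceding sentence — a homogeneous quasi-morphism on $\pi_1(\B M)$ that is not a homomorphism induces a $\overline{\Psi}$ that is not a homomorphism — forces $\psi$ itself to be a homomorphism, i.e. $[\psi]=0$. Hence $\mathfrak{Polt}_{\B M}$ has trivial kernel and is injective.

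Thus the only point requiring argument is the input borrowed from \cite{P}: if $\psi$ is not a homomorphism then $\overline{\Psi}$ is not a homomorphism. Here I would pick $\gamma_1,\gamma_2\in\pi_1(\B M,z)$ with $\psi(\gamma_1\gamma_2)\neq\psi(\gamma_1)+\psi(\gamma_2)$, using that a homogeneous quasi-morphism is invariant under conjugation, so this inequality is insensitive to conjugating the $\gamma_i$ and in particular $\psi(\gamma_1\gamma_2)=\psi(\gamma_2\gamma_1)$. Realise $\gamma_1,\gamma_2$ by smooth embedded loops based at $z$ whose thin tubular neighbourhoods $N_1,N_2$ overlap only in a small ball $D\ni z$, and let $\phi_i\in\Diff_0(\B M,\vol)$ be a volume-preserving diffeomorphism, supported in $N_i$ and isotopic to $\Id$ through such maps, which drags $D$ exactly once around $\gamma_i$ and back to itself. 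For $x$ in the core of $N_i$ one has $[((\phi_i)^p)_x]$ freely homotopic to $\gamma_i^p$, and the partially dragged points of $N_i$ contribute to $\int_{\B M}\psi([((\phi_i)^p)_x])\,\vol$ an amount growing linearly in $p$ with slope proportional to $\psi(\gamma_i)$; dividing by $p$ and using homogeneity of $\psi$ gives $\overline{\Psi}(\phi_i)=\psi(\gamma_i)\,c_i$ for an explicit positive constant $c_i$. Running the two drags consecutively, for $x\in D$ the diffeomorphism $\phi_1\phi_2$ drags $D$ first around $\gamma_2$ and then around $\gamma_1$, so $[(\phi_1\phi_2)_x]$ is conjugate to $\gamma_2\gamma_1$, while on $N_1\setminus D$ the map $\phi_2$ acts trivially and on $N_2\setminus D$ the map $\phi_1$ acts trivially; this yields $\overline{\Psi}(\phi_1\phi_2)=\psi(\gamma_2\gamma_1)\,\vol(D)+\psi(\gamma_1)(c_1-\vol(D))+\psi(\gamma_2)(c_2-\vol(D))$. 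Subtracting, $\overline{\Psi}(\phi_1\phi_2)-\overline{\Psi}(\phi_1)-\overline{\Psi}(\phi_2)=\big(\psi(\gamma_1\gamma_2)-\psi(\gamma_1)-\psi(\gamma_2)\big)\vol(D)\neq 0$, so $\overline{\Psi}$ is not a homomorphism.

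I expect the main obstacle to lie entirely in this last paragraph, not in the formal reduction: one must build the loop-dragging diffeomorphisms honestly (genuinely volume preserving and genuinely isotopic to the identity through volume-preserving maps), arrange the embedded representatives so that their tubular neighbourhoods meet only near $z$ — which may require first replacing the $\gamma_i$ by suitable conjugates — and, above all, control the limit $\lim_{p\to\infty}\frac{1}{p}\int_{\B M}\psi([((\phi_i)^p)_x])\,\vol$, checking that the partially dragged region contributes a constant multiple of $\psi(\gamma_i)$ rather than something uncontrolled. All of this is precisely what is done in \cite{P}, so for the present purposes one may simply cite it; every other step is purely formal.
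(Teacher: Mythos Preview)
Your proposal is correct and matches the paper's approach exactly: the paper offers no proof beyond the preceding sentence, treating the corollary as the immediate linear-algebra consequence of Polterovich's result in \cite{P} that a non-homomorphism $\psi$ yields a non-homomorphism $\overline{\Psi}$, which is precisely the reduction you spell out in your first paragraph. Your second and third paragraphs go further than the paper does, sketching the point-pushing argument behind the cited input from \cite{P}; this extra detail is sound but is not part of the paper's own proof.
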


Let $g>1$. Since the group $\Ham(\B \Sigma_g)$ is simple \cite{Ba,Ba1}, the linear space $\widehat{Q}(\Ham(\B \Sigma_g))$ coincides with
$Q(\Ham(\B \Sigma_g))$. We denote by $\mathfrak{Polt}_g$ the linear map $\widehat{Q}(\pi_1(\B \Sigma_g))\to \widehat{Q}(\Ham(\B \Sigma_g))$ induced by Polterovich construction.

\begin{prop}\label{P:qm-injectivity}
The linear map $\mathfrak{Polt}_g\colon \widehat{Q}(\pi_1(\B \Sigma_g))\to \widehat{Q}(\Ham(\B \Sigma_g))$ is injective.
\end{prop}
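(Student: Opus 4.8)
\emph{Sketch of the argument.} The plan is to derive the injectivity of $\mathfrak{Polt}_g$ from that of $\mathfrak{Polt}_{\B\Sigma_g}\colon\widehat{Q}(\pi_1(\B\Sigma_g))\to\widehat{Q}(\Diff_0(\B\Sigma_g,\o))$, established in Corollary \ref{cor:Polt-map-injectivity}, using the way $\Ham(\B\Sigma_g)$ sits inside $\Diff_0(\B\Sigma_g,\o)$. I would open with two observations. First, the Polterovich construction on $\Ham(\B\Sigma_g)$ is literally the restriction of the one on $\Diff_0(\B\Sigma_g,\o)$: the integral defining $\Psi$ (and its homogenization) uses only that the diffeomorphism is area-preserving and lies in the identity component, and is insensitive to the chosen isotopy, so for a homogeneous quasi-morphism $\psi$ on $\pi_1(\B\Sigma_g)$ the value $\mathfrak{Polt}_g([\psi])$ is precisely the class of $\overline\Psi|_{\Ham(\B\Sigma_g)}$, where $\overline\Psi:=\mathfrak{Polt}_{\B\Sigma_g}(\psi)\in Q(\Diff_0(\B\Sigma_g,\o))$. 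Second, by Banyaga's theorem \cite{Ba} the subgroup $\Ham(\B\Sigma_g)$ is the kernel of the flux homomorphism, hence a normal subgroup of $\Diff_0(\B\Sigma_g,\o)$ with abelian quotient; moreover $\Ham(\B\Sigma_g)$ is simple, so it carries no nonzero homomorphism to $\B R$ (this is why $\widehat{Q}(\Ham(\B\Sigma_g))=Q(\Ham(\B\Sigma_g))$).

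The one auxiliary fact I would isolate is elementary and purely group-theoretic: if $N$ is a normal subgroup of a group $G$ and $\phi\in Q(G)$ vanishes identically on $N$, then $\phi$ descends to a homogeneous quasi-morphism on $G/N$; and if $G/N$ is abelian, then the descended map, and hence $\phi$ itself, is a homomorphism. For the descent one writes $(gn)^k=g^k n_k$ with $n_k\in N$ and combines the quasi-morphism inequality with $\phi(n_k)=0$ and homogeneity to get $|\phi(gn)-\phi(g)|\le D_\phi/k\to 0$, so that $\phi$ is constant on the cosets of $N$; the resulting function on $G/N$ is visibly a homogeneous quasi-morphism, and a homogeneous quasi-morphism on an abelian group is additive.

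With these in place the proof closes at once. Let $[\psi]\in\widehat{Q}(\pi_1(\B\Sigma_g))$ be a nonzero class, represented by a homogeneous quasi-morphism $\psi$ which is therefore not a homomorphism, and set $\overline\Psi:=\mathfrak{Polt}_{\B\Sigma_g}(\psi)$. Assume for contradiction that $\mathfrak{Polt}_g([\psi])=0$. Since $\Ham(\B\Sigma_g)$ has no nonzero homomorphism to $\B R$, this forces $\overline\Psi$ to vanish identically on $\Ham(\B\Sigma_g)$; applying the auxiliary fact with $N=\Ham(\B\Sigma_g)\trianglelefteq G=\Diff_0(\B\Sigma_g,\o)$, whose quotient is abelian, we conclude that $\overline\Psi$ is a homomorphism on $\Diff_0(\B\Sigma_g,\o)$. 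This contradicts the statement recorded just before Corollary \ref{cor:Polt-map-injectivity}, that the Polterovich construction sends a homogeneous quasi-morphism which is not a homomorphism to one which is again not a homomorphism. Hence $\mathfrak{Polt}_g([\psi])\ne 0$, and $\mathfrak{Polt}_g$ is injective. I do not expect a genuinely hard step here: the only place that demands a little care is the second observation — correctly invoking Banyaga's flux picture on the closed surface to exhibit $\Ham(\B\Sigma_g)$ as a normal subgroup of $\Diff_0(\B\Sigma_g,\o)$ with abelian quotient — and the gain over the approach of \cite{B-preparation} is precisely that one never has to construct explicit Hamiltonian diffeomorphisms of $\B\Sigma_g$ detected by $\mathfrak{Polt}_g$.
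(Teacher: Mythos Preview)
Your argument is correct and follows essentially the same route as the paper's. The paper isolates the same auxiliary lemma (a homogeneous quasi-morphism vanishing on the commutator subgroup is a homomorphism) and then invokes Banyaga's theorem in the form $\Ham(\B\Sigma_g)=[\Diff_0(\B\Sigma_g,\area),\Diff_0(\B\Sigma_g,\area)]$, whereas you phrase the same fact via the flux homomorphism (normal subgroup with abelian quotient); the two formulations are equivalent and the remainder of the contradiction with Corollary~\ref{cor:Polt-map-injectivity} is identical.
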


\begin{proof}
Suppose that $\mathfrak{Polt}_g$ is not injective. Then there exists a quasi-morphism $\psi$ in $Q(\pi_1(\B \Sigma_g))$, which is not a homomorphism, such that the induced homogeneous quasi-morphism $\overline{\Psi}$ on $\Diff_0(\B \Sigma_g, \area)$ vanishes on the group $\Ham(\B \Sigma_g)$.

\begin{lem}\label{L:qm-comm-gp-ext}
Let $G$ be a group and $G'$ its commutator subgroup. Then every homogeneous quasi-morphism on $G$ that vanishes on $G'$ is a homomorphism.
\end{lem}

\begin{proof}
Let $\varphi\colon G\to\B R$ be a homogeneous quasi-morphism such that $\varphi(g')=0$ for every $g'\in G'$. Since $G'$ is a normal subgroup of $G$, the map $\widehat{\varphi}\colon G/G'\to \B R$, given by $\widehat{\varphi}(gG')=\varphi(g)$, is a well-defined homogeneous quasi-morphism. The group $G/G'$ is abelian and hence $\widehat{\varphi}$ is a homomorphism and so is $\varphi$.
\end{proof}

By a theorem of Banyaga \cite{Ba1} the group $\Ham(\B \Sigma_g)$ coincides with the commutator subgroup of $\Diff_0(\B \Sigma_g, \area)$. It follows from Lemma \ref{L:qm-comm-gp-ext} that $\overline{\Psi}$ is a homomorphism, which contradicts the fact that the map
$$\mathfrak{Polt}_{\B \Sigma_g}\colon \widehat{Q}(\pi_1(\B\Sigma_g))\to \widehat{Q}(\Diff_0(\B \Sigma_g, \area))$$
is injective by Corollary \ref{cor:Polt-map-injectivity}.
\end{proof}

\section{Proofs}
\label{sec-proofs}

\subsection{Curves traced by Morse autonomous flows}
\label{sec-curves-aut-flows}
Let $h_t$ be an autonomous flow generated by a Morse function $H\colon\B\Sigma_g\to\B R$ and set $h:=h_1$. Let $x\in\B\Sigma_g$ which satisfies the following conditions:

\begin{itemize}
\item
$x$ is a regular point of $H$,
\item
$x$ belongs to only one connected component, i.e. a simple closed curve in $\B\Sigma_g$, of the set $H^{-1}(H(x))$.
\end{itemize}

Such a set of points in $\B\Sigma_g$ is denoted by $\mathrm{Reg}_H$. Note that the measure of $\B\Sigma_g\setminus\mathrm{Reg}_H$ is zero.
For each $x\in\mathrm{Reg}_H $ let $c_x\colon[0,1]\to\B\Sigma_g$
be an injective path (on $(0,1)$), such that $c_x(0)=c_x(1)=x$ and its image is a simple closed curve which is a connected component of $H^{-1}(H(x))$. For every $y_1,y_2\in \B\Sigma_g$ choose an injective map $s_{y_1y_2}\colon[0,1]\to \B\Sigma_g$
whose image is a short geodesic path from $y_1$ to $y_2$. Define
\begin{equation}\label{eq:gamma-reg}
\gamma_x(t):=
\begin{cases}
s_{zx}(3t) &\text{ for } t\in \left [0,\frac13\right ]\\
c_x(3t-1) &\text{ for } t\in \left [\frac13,\frac23\right ]\\
s_{xz}(3t-2) & \text{ for } t\in \left [\frac23,1\right ]
\end{cases}.
\end{equation}
Denote by $[\gamma_x]$ the corresponding element in $\pi_1(\B\Sigma_g, z)$. Let $x\in\mathrm{Reg}_H$ and let $[h_x]$ be an element in $\pi_1(\B\Sigma_g, z)$ represented by a path which is a concatenation of paths $s_{zx}$, $h_t(x)$ and $s_{xz}$. Then for each $p\in\B N$ the element $[h^p_x]$ can be written as a product
\begin{equation}\label{eq:braid-morse-aut}
[h^p_x]=\a'_{p,x}\circ[\gamma_{x}]^{k_{h,p}}\circ\a''_{p,x}\thinspace,
\end{equation}
where $k_{h,p}$ is an integer which depends only $h$, $p$ and $x$, and the word length of elements $\a'_{p,x}\thinspace, \a''_{p,x}$ in $\pi_1(\B\Sigma_g, z)$ is bounded by some constant $K$ which is independent of $h$, $x$ and $p$.

Denote by $\mathcal{MCG}_g^1$ the mapping class group of a surface $\B\Sigma_g$ with one puncture $z$. Recall that there is a following short exact sequence due to Birman \cite{Bir1}
\begin{equation}\label{eq:Birman-SES}
1\to \pi_1(\B\Sigma_g, z)\to\mathcal{MCG}_g^1\to\mathcal{MCG}_g\to 1,
\end{equation}
where $\mathcal{MCG}_g$ is the mapping class group of a surface $\B\Sigma_g$. Hence we view $\pi_1(\B\Sigma_g, z)$ as a normal subgroup of $\mathcal{MCG}_g^1$.

\begin{prop}\label{P:finite-conj-classes}
Let $g>1$. There exists a finite set $S_g$ of elements in $\mathcal{MCG}_g^1$, such that for every Morse function $H\colon\B\Sigma_g\to\B R$ and every $x\in\mathrm{Reg}_H$ the loop $[\g_x]\in\pi_1(\B\Sigma_g, z)<\mathcal{MCG}_g^1$
is conjugated to some element in $S_g$.
\end{prop}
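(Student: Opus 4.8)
The plan is to show that the curves $[\gamma_x]$, as $H$ and $x\in\mathrm{Reg}_H$ vary, fall into finitely many conjugacy classes in $\mathcal{MCG}_g^1$ by combining two facts: (i) such a curve is always a \emph{simple} closed curve on $\B\Sigma_g$ (after forgetting the geodesic arcs to and from $z$ it is a connected component of a level set of a Morse function), and (ii) the simple closed curves on a closed surface fall into finitely many topological types under the action of $\mathcal{MCG}_g$, classified by a small amount of combinatorial data. First I would make precise the passage from the based loop $[\gamma_x]\in\pi_1(\B\Sigma_g,z)$ to the free homotopy class of the underlying unbased loop $c_x$: conjugacy classes in $\pi_1(\B\Sigma_g,z)$ correspond to free homotopy classes of loops, and since $\pi_1(\B\Sigma_g,z)$ sits inside $\mathcal{MCG}_g^1$ as a normal subgroup via the Birman sequence \eqref{eq:Birman-SES}, it suffices to produce finitely many $\mathcal{MCG}_g^1$-conjugacy classes covering all these free homotopy classes. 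Because $\mathcal{MCG}_g^1$ acts on $\pi_1(\B\Sigma_g,z)$ by (a subgroup of) $\Aut(\pi_1(\B\Sigma_g,z))$ that realizes all homeomorphisms of the punctured surface, it in particular realizes, after possibly a further isotopy, the action of $\mathcal{MCG}_g$ on free homotopy classes of curves missing $z$; so what I really need is that the curves $c_x$ represent only finitely many $\mathcal{MCG}_g$-orbits of free homotopy classes.

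The key step is then the topological classification of the connected components $c_x$ of level sets of Morse functions. Each $c_x$ is an embedded circle, hence is either (a) null-homotopic, i.e. bounds an embedded disk, or (b) non-separating, or (c) separating $\B\Sigma_g$ into two pieces of genera $(j, g-j)$ with $1\le j\le g-1$. The change-of-coordinates principle for surfaces (see e.g. Farb--Margalit) says that $\mathcal{MCG}_g$ acts transitively on isotopy classes of essential simple closed curves of each fixed topological type in this list, and the null-homotopic ones form a single class (the trivial element). Thus the unbased curves $c_x$ realize at most $1 + 1 + (g-1) = g+1$ free homotopy classes up to the $\mathcal{MCG}_g$-action. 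Lifting each of these finitely many classes to a chosen representative element of $\pi_1(\B\Sigma_g,z)\subset\mathcal{MCG}_g^1$ gives the desired finite set $S_g$: every $[\gamma_x]$ is conjugate in $\mathcal{MCG}_g^1$ to one of these representatives, because a homeomorphism of $\B\Sigma_g$ taking $c_x$ to the chosen model curve can be isotoped to fix $z$ and then represents an element of $\mathcal{MCG}_g^1$ conjugating the corresponding based loops up to an element of $\pi_1$, which is absorbed into the conjugacy class.

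The main obstacle is bookkeeping the base point $z$ carefully: conjugacy in $\mathcal{MCG}_g^1$ is a priori finer than free homotopy of unbased loops in $\B\Sigma_g$, and one must check that the extra freedom of conjugating by elements of $\mathcal{MCG}_g^1$ (not just by $\pi_1$) is exactly what collapses the based classes down to the finitely many topological types — i.e. that two based loops whose underlying curves are related by a homeomorphism of $\B\Sigma_g$ fixing $z$ are conjugate in $\mathcal{MCG}_g^1$. This follows because such a homeomorphism \emph{is} an element of $\mathcal{MCG}_g^1$ and conjugation by it sends the based homotopy class of one loop to that of the image loop; the only subtlety is that the homeomorphism realizing the change of coordinates can always be chosen to fix $z$, which holds since the model curves can be placed to avoid $z$ and the change-of-coordinates homeomorphisms can be taken to be the identity near $z$. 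Once this is in place, finiteness of $S_g$ is immediate from the finite list of topological types, and one should also note the bound $K$ on the word lengths of $\a'_{p,x},\a''_{p,x}$ from \eqref{eq:braid-morse-aut} is consistent with (indeed a consequence of the same boundedness used in constructing) this finite set.
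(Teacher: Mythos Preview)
Your proposal is correct and follows essentially the same route as the paper: both arguments reduce the proposition to the finiteness of topological types of simple closed curves on $\B\Sigma_g$ (the change-of-coordinates principle), and then use that a homeomorphism carrying $c_x$ to a model curve implements a conjugacy in $\mathcal{MCG}_g^1$.

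The one notable presentational difference is that the paper makes the Birman embedding explicit: it observes that the point-pushing class of a simple based loop is (conjugate to) $t_{c_{x,+}}\circ t_{c_{x,-}}^{-1}$, a product of Dehn twists about the two curves bounding a regular neighborhood, and then uses the standard identity $f\circ t_c\circ f^{-1}=t_{f(c)}$ to transport this to the model. This gives a completely explicit finite set $S_g=\{t_{\delta_{i,+}}\circ t_{\delta_{i,-}}^{-1}\}$ indexed by the topological types. Your argument keeps the representatives abstract and instead tracks the basepoint directly, arguing that the change-of-coordinates homeomorphism can be arranged to fix $z$ and hence acts on $\pi_1(\B\Sigma_g,z)$ by conjugation in $\mathcal{MCG}_g^1$. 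Both are fine; the Dehn-twist description has the mild advantage of making $S_g$ concrete, while your version avoids having to invoke the point-pushing formula. (A small quibble: your count of $g+1$ types double-counts separating curves, since the splittings $(j,g-j)$ and $(g-j,j)$ give the same type; this of course does not affect finiteness.)
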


\begin{proof} Let $x\in\mathrm{Reg}_H$. If the loop $\g_x(t)$ is homotopically trivial in $\B\Sigma_g$, then $[\g_x]=1_{\mathcal{MCG}_g^1}$.
Suppose that $\g_x(t)$ is homotopically non-trivial in $\B\Sigma_g$. We say that simple closed curves $\delta,\delta'\in\B\Sigma_g$ are equivalent $\delta\cong\delta'$, if there exists a homeomorphism $f\colon\B\Sigma_g\to\B\Sigma_g$ such that $f(\delta)=\delta'$. It follows from classification of surfaces that the set of equivalence classes $\mathcal{E}_g$ is finite. Let $c_x$ be a simple closed curve defined in \eqref{eq:gamma-reg}. Since $\B\Sigma_g$ and $c_x$ are oriented, the curve $c_x$ splits in $\B\Sigma_g\setminus\{x\}$ into two simple closed curves $c_{x,+}$ and $c_{x,-}$ which are homotopic in $\B\Sigma_g$, see Figure \ref{fig:path-split}.
\begin{figure}[htb]
\centerline{\includegraphics[height=1.3in]{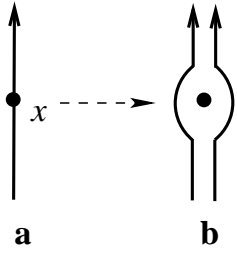}}
\caption{\label{fig:path-split} Part of the curve $c_x$ is shown in Figure \textbf{a}. Its splitting into curves $c_{x,+}$ and $c_{x,-}$ is shown in Figure \textbf{b}. The left curve in Figure \textbf{b} is $c_{x,+}$ and the right curve is $c_{x,-}.$}
\end{figure}

The image of the element $[\g_x]$ in $\mathcal{MCG}_g^1$, under the Birman embedding \eqref{eq:Birman-SES} of $\pi_1(\B\Sigma_g,z)$ into $\mathcal{MCG}_g^1$, is conjugated to $t_{c_{x,+}}\circ t^{-1}_{c_{x,-}}$, where $t_{c_{x,+}}$ and $t_{c_{x,-}}$ are Dehn twists in $\B\Sigma_g\setminus\{x\}$ about curves $c_{x,+}$ and $c_{x,-}$ respectively.

Note that if $c_x\cong\delta$ then there exists a homeomorphism $f\colon\B\Sigma_g\to\B\Sigma_g$ such that $f(c_x)=\delta$, hence $f(c_{x,+})=\delta_+$ and $f(c_{x,-})=\delta_-$. We have
$$
t_{\delta_+}=f\circ t_{c_{x,+}}\circ f^{-1}\qquad t_{\delta_-}=f\circ t_{c_{x,-}}\circ f^{-1}.
$$
This yields
$$f\circ(t_{c_{x,+}}\circ t^{-1}_{c_{x,-}})\circ f^{-1}=t_{\delta_+}\circ t^{-1}_{\delta_-}.$$
In other words, an element $[\g_x]$ is conjugated in $\mathcal{MCG}_g^1$ to some $t_{\delta_+}\circ t^{-1}_{\delta_-}$, where $\delta$ is a representative of an equivalence class in $\mathcal{E}_g$. Let $\{\delta_i\}_{i=1}^{\#\mathcal{E}_g}$ be a set of simple closed curves in $\B\Sigma_g$, such that each equivalence class in $\mathcal{E}_g$ is represented by some $\delta_i$. Let
\begin{equation}\label{eq:finite-set-S}
S_g:=\{t_{\delta_{1,+}}\circ t^{-1}_{\delta_{1,-}},\ldots,t_{\delta_{\#\mathcal{E}_g,+}}\circ t^{-1}_{\delta_{\#\mathcal{E}_g,-}}\}.
\end{equation}
It follows that $[\g_x]$ is conjugated to some element in $S_g$. Noting that the set $S_g$ neither depend on $H$ nor on $x$, we conclude the proof of the proposition.
\end{proof}

\subsection{Continuity of Polterovich quasi-morphisms}
\label{ssec-continuity}
The aim of this subsection is to prove the following technical result which
will be used in the proof of Theorem \ref{T:main}.

\begin{thm}\label{T:Morse-Ham}
Let $H\colon\B\Sigma_g\to\B R$ and $\{H_k\}_{k=1}^\infty$ be a sequence of functions such that
each $H_k\colon\B\Sigma_g\to\B R$ and $H_k\xrightarrow[k\rightarrow\infty]{}H$ in
$C^1$-topology. Let $h$ and $h_k$ be the time-one maps of the Hamiltonian flows
generated by $H$ and $H_k$ respectively. Then
$$
\lim\limits_{k\to\infty}\overline{\Psi}(h_k)=\overline{\Psi}(h),
$$
where $\overline{\Psi}$ is a homogeneous quasi-morphism on $\Ham(\B\Sigma_g)$ induced by Polterovich construction.
\end{thm}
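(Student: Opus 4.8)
The plan is to reduce the statement to a question about the Polterovich quasi-morphism $\overline{\Psi}$ evaluated via the integral formula, and to control the integrand uniformly as $H_k \to H$ in $C^1$. Recall from the Polterovich construction that
$$
\overline{\Psi}(f) = \lim_{p\to\infty}\frac1p\int_{\B\Sigma_g}\psi\bigl([(f^p)_x]\bigr)\,\omega,
$$
where $[(f^p)_x]\in\pi_1(\B\Sigma_g,z)$ is built from a fixed family of short geodesics together with the trajectory $\{f^t(x)\}$. First I would fix, once and for all, a word metric $\|\cdot\|$ on $\pi_1(\B\Sigma_g,z)$, and recall the standard fact that a homogeneous quasi-morphism $\psi$ satisfies $|\psi(\gamma)| \le D_\psi\cdot(\|\gamma\|+1)$ (up to an additive constant), so that pointwise bounds on word length translate into pointwise bounds on $\psi$. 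The point is that $[(h_k^p)_x]$ and $[(h^p)_x]$, for $x$ in the full-measure set $\mathrm{Reg}_H$ (or rather its intersection with the corresponding sets for the $H_k$), can be compared in word length: their difference is controlled by how far apart the trajectories $h_k^t(x)$ and $h^t(x)$ are, and $C^1$-closeness of the Hamiltonians gives $C^0$-closeness of the time-$p$ flow maps on a fixed time interval — but crucially the relevant comparison is between $[(h_k^p)_x]$ and a bounded-length modification of $[(h^p)_x]$, with the bound independent of $p$ once $k$ is large, precisely because the two trajectories stay in a common small tube and any discrepancy contributes only a word of bounded length.

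The key structural input is Proposition \ref{P:finite-conj-classes} together with \eqref{eq:braid-morse-aut}: for Morse $H$ one has $[h^p_x] = \alpha'_{p,x}\circ[\gamma_x]^{k_{h,p}}\circ\alpha''_{p,x}$ with $\|\alpha'_{p,x}\|,\|\alpha''_{p,x}\|\le K$ uniformly, and $[\gamma_x]$ lies in finitely many conjugacy classes. Hence, using that $\psi$ is a homogeneous quasi-morphism and therefore a conjugacy invariant with controlled behaviour under multiplication by bounded-length elements, one gets
$$
\Bigl|\psi\bigl([h^p_x]\bigr) - |k_{h,p}|\cdot\psi_{\max}(x)\Bigr| \le C(K,D_\psi),
$$
where $\psi_{\max}(x)$ is the value of $\psi$ on the appropriate element of the finite set $S_g$ and $C$ is independent of $p$ and $x$. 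After dividing by $p$ and letting $p\to\infty$, the additive error washes out, and $\overline{\Psi}(h)$ is expressed as $\int_{\B\Sigma_g}(\lim_p k_{h,p}/p)\,\psi_{\max}(x)\,\omega$. The same holds for each $h_k$. So the problem becomes: show $k_{h_k,p}/p$ converges to $k_{h,p}/p$ in an integrated sense, and the relevant class in $S_g$ is the same for $h_k$ and $h$ at almost every $x$. The quantity $k_{h,p}/p$ is essentially the asymptotic winding/linking number of the trajectory around the level curve of $H$ through $x$; here one needs $H$ itself to be handled even if it is only assumed $C^1$ (not necessarily Morse), which is why the theorem is stated for general $H$ with Morse $H_k$ approximating it — so in fact I would likely first reduce to the case $H$ Morse by a further approximation, or argue that the asymptotic quantity and the finite-set membership are $C^1$-continuous directly.

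The main obstacle I anticipate is the uniformity in $p$: a priori the word length of $[(h_k^p)_x]$ versus $[(h^p)_x]$ could grow with $p$ even for trajectories that start close, since small flow discrepancies can accumulate over the long time interval $[0,p]$ and the trajectories can wind differently around a handle. The resolution must exploit that the autonomous flow $h^t$ preserves the level sets of $H$, so $h^p(x)$ returns near the curve $c_x$ after each period and the winding is essentially monotone and rigid; for $h_k$ with $H_k$ $C^1$-close one argues that its trajectory shadows the level curve of $H$ closely enough that the winding numbers agree up to a bounded error on a set of $x$ of measure tending to $1$, while on the exceptional small-measure set one uses the crude bound $|\psi([(h_k^p)_x])|/p \le D_\psi\cdot\|[(h_k^p)_x]\|/p + o(1)$ and the fact that this word length grows at most linearly in $p$ with a rate bounded uniformly in $k$ (by compactness of $\B\Sigma_g$ and $C^1$-boundedness of the $H_k$), so its contribution to the integral is small. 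Assembling these estimates and invoking dominated convergence for the integrals gives $\lim_k \overline{\Psi}(h_k) = \overline{\Psi}(h)$.
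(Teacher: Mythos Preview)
Your approach is genuinely different from the paper's and, as written, has a real gap precisely at the point you flag as ``the main obstacle''.

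The paper does not touch the integral formula at all. Instead it quotes an earlier result of the author (Theorem~\ref{T:lipschitz}): the Polterovich quasi-morphism $\overline{\Psi}$ is Lipschitz with respect to the right-invariant $L^1$-metric on $\Ham(\B\Sigma_g)$. Given this, the argument is two lines. For any $p$,
\[
|\overline{\Psi}(f)-\overline{\Psi}(h)|=\frac{1}{p}\,|\overline{\Psi}(f^p)-\overline{\Psi}(h^p)|
\le \frac{D_{\overline{\Psi}}+|\overline{\Psi}(f^ph^{-p})|}{p}
\le \frac{D_{\overline{\Psi}}+C\,\B d_1(f^p,h^p)}{p}.
\]
One then checks (Lemma~\ref{lem:epsilon-close}, a straightforward ODE estimate) that for each fixed $p$ there is $\delta_p>0$ with $\B d_1(f^p,h^p)<1$ whenever $F$ is $\delta_p$-close to $H$ in $C^1$. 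Choosing $p$ large makes the right-hand side smaller than $\epsilon$; this gives continuity of $\overline{\Psi}$ on autonomous Hamiltonians in the $C^1$ topology, and the theorem follows. Note that this uses nothing about Morse functions, level curves, or $S_g$; in the paper that machinery is used only later to show $\overline{\Psi}$ vanishes on Morse autonomous diffeomorphisms.

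Your route tries to compare the integrands $\psi([(h_k^p)_x])$ and $\psi([(h^p)_x])$ directly, via the decomposition~\eqref{eq:braid-morse-aut} and a shadowing argument. The problem is exactly the interchange of limits you anticipate: for fixed $x$, the discrepancy $|k_{h_k,p}-k_{h,p}|$ is governed by the difference of the \emph{periods} of the level curves of $H_k$ and $H$ through $x$, and over the time interval $[0,p]$ this discrepancy grows linearly in $p$ with slope proportional to that period difference. So $|k_{h_k,p}-k_{h,p}|/p$ does \emph{not} go to zero uniformly in $k$; you must first let $k\to\infty$ to make the slope small, then $p\to\infty$. Your sketch (``winding numbers agree up to a bounded error'') asserts the wrong order of quantifiers. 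One can try to repair this by a double-limit / Moore--Osgood argument, but you would need equicontinuity of $p\mapsto k_{h_k,p}/p$ in $k$, and near critical levels of $H$ (where periods blow up) this fails and your ``small exceptional set'' device has to absorb an unbounded rotation-number error, not just a bounded word-length error. Making this precise is possible but substantially harder than the paper's two-line Lipschitz argument; as written your proposal does not close the gap.

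A secondary issue: the theorem is stated for arbitrary smooth $H$, not Morse, so the decomposition~\eqref{eq:braid-morse-aut} is not available for $h$ itself. You note this, but ``reduce to $H$ Morse by a further approximation'' is circular here --- that reduction \emph{is} the content of the theorem.
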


\begin{proof}
The proof of this theorem is similar to the proof of Theorem 3.4 in \cite{BK2}. For the reader convenience, we present the proof below.

At this point we recall a definition of the $L^1$-metric on the group $\Ham(\B\Sigma_g)$.
It is defined as follows. Let
$$
\mathcal{L}_1\{h_t\}:=
\int_0^1 \int_{\B \Sigma_g}|\dot{h_t}(x)|\o dt
$$
be the $L^1$-length
of a path $\{h_t\}_{t\in [0,1]}\in\Ham(\B \Sigma_g)$,
where $|\dot{h_t}(x)|$ denotes the length of the tangent
vector $\dot{h_t}(x)\in T_x\B\Sigma_g$ induced by the Riemannian
metric. Observe that this length is right-invariant, that is,
$\mathcal{L}_1\{h_t\circ f\}=\mathcal{L}_p\{h_t\}$
for any $f\in \Ham(\B\Sigma_g)$. It defines a non-degenerate right-invariant
metric on $\Ham(\B\Sigma_g)$ by
$$
{\bf d}_1(h_0,h_1):=\inf_{h_t}\mathcal{L}_1\{h_t\},
$$
where the infimum is taken over all paths from
$h_0$ to $h_1$. See Arnol'd-Khesin \cite{AK} for a detailed discussion.
We set $\|h\|_1:={\bf d}_1(\Id,h)$. In \cite{B-infinity} the author proved the following

\begin{thm}[\cite{B-infinity}]\label{T:lipschitz}
Let $\B \Sigma_g$ be a closed hyperbolic surface, and $\overline{\Psi}$ a homogeneous quasi-morphism on $\Ham(\B\Sigma_g)$ induced by Polterovich construction. Then $\overline{\Psi}$ is Lipschitz with respect to the $L^1$-metric on the group $\Ham(\B\Sigma_g)$, i.e. there exists $C>0$ such that $\forall h\in \Ham(\B\Sigma_g)$
$$\overline{\Psi}(h)\leq C\|h\|_1.$$
\end{thm}
\begin{lem}\label{lem:epsilon-close}
Let $G\colon\B\Sigma_g\to\B R$ be smooth function. Then for any $\epsilon>0$ and $p\in\B N$ there exists $\delta_p>0$, such that if
$G$ is $\delta_p$-close to a smooth function $F\colon\B\Sigma_g\to\B R$ in $C^1$-topology, then
$$\B d_1(g^p,f^p)<\epsilon,$$
where $g_t$ and $f_t$ are the Hamiltonian flows generated by $G$ and $F$, and $g$ and $h$ are time-one maps of these flows.
\end{lem}
\begin{proof}
We replace $\B D^2$ by $\B\Sigma_g$ in the proof of Lemma 3.7 in \cite{BK2}. Now the proof is identical to the proof of Lemma 3.7 in \cite{BK2}.
\end{proof}

\begin{prop}\label{P:Morse-Ham-epsilon}
Let $H\colon\B\Sigma_g\to\B R$. Then for any $\epsilon>0$ there exists $\delta>0$,
such that if $F\colon\B\Sigma_g\to\B R$ is $\delta$-close to $H$ in $C^1$-topology then:
$$
\left|\overline{\Psi}(h)-\overline{\Psi}(f)\right|\leq\epsilon,
$$
where $h$ and $f$ are time-one maps of flows generated by $H$ and $F$.
\end{prop}

\begin{proof}
Fix some $\epsilon>0$. Let $C$ be the constant which was defined in Theorem \ref{T:lipschitz}.
Take $p\in\B N$ such that $\frac{D_{\overline{\Psi}}+C}{p}<\epsilon$. It
follows from Lemma \ref{lem:epsilon-close} that there exists $\delta_p>0$, such
that if $F$ is $\delta_p$-close to $H$ in $C^1$-topology, then
$\B d_1(f^p,h^p)<1$. Thus we obtain
$$
\left|\overline{\Psi}(f)-\overline{\Psi}(h)\right|=
\frac{1}{p}\left|\overline{\Psi}(f^p)-\overline{\Psi}(h^p)\right|\leq
\frac{D_{\overline{\Psi}}+\left|\overline{\Psi}(f^ph^{-p})\right|}{p}.
$$
It follows from Theorem \ref{T:lipschitz} that
$$
\left|\overline{\Psi}(f^ph^{-p})\right|\leq C\B d_1(Id,f^ph^{-p})=C\B d_1(f^p,h^p)<C.
$$
Thus
$$
\left|\overline{\Psi}(f)-\overline{\Psi}(h)\right|<\frac{D_{\overline{\Psi}}+C}{p}<\epsilon.
$$
\end{proof}

Proposition \ref{P:Morse-Ham-epsilon} concludes the proof of Theorem \ref{T:Morse-Ham}.
\end{proof}

\subsection{Proof of Theorem \ref{T:main}}
\label{ssec-proof-main}

Recall that the group $\pi_1(\B\Sigma_g)$ is a subgroup of $\mathcal{MCG}_g^1$. Denote by $Q_{\mathcal{MCG}_g^1}(\pi_1(\B\Sigma_g), S_g)$ the space of homogeneous quasi-morphisms on $\pi_1(\B\Sigma_g)$ so that:
\begin{itemize}
\item
For each $\varphi\in Q_{\mathcal{MCG}_g^1}(\pi_1(\B\Sigma_g), S_g)$ there exists $\widehat{\varphi}\in Q(\mathcal{MCG}_g^1)$ such that $\widehat{\varphi}|_{\pi_1(\B\Sigma_g)}=\varphi$,
\item
each $\varphi$ vanishes on the finite set $S_g$,
\end{itemize}
where $S_g$ is the set defined in \eqref{eq:finite-set-S}. The group $\pi_1(\B\Sigma_g)$ contains a non-abelian free group, and thus is not virtually abelian. It is an infinite normal subgroup of $\mathcal{MCG}_g^1$ and hence is a non-reducible subgroup of $\mathcal{MCG}_g^1$, see
\cite[Corollary 7.13]{Ivanov}. Now, by a result of Bestvina-Fujiwara \cite[Theorem 12]{BF} we have the following

\begin{cor}\label{cor:inf-dim-qm-restriction}
The space $Q_{\mathcal{MCG}_g^1}(\pi_1(\B\Sigma_g), S_g)$ is infinite dimensional.
\end{cor}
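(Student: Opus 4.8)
The plan is to obtain Corollary~\ref{cor:inf-dim-qm-restriction} as an essentially immediate consequence of the Bestvina--Fujiwara construction \cite[Theorem~12]{BF}, together with the elementary fact that vanishing on a finite set cuts a space of homogeneous quasi-morphisms down by only a finite codimension. All the hypotheses needed to invoke \cite{BF} have already been recorded in the paragraph preceding the corollary: under the Birman embedding \eqref{eq:Birman-SES}, $\pi_1(\B\Sigma_g)$ is an infinite normal subgroup of $\mathcal{MCG}_g^1$, it is non-reducible by \cite[Corollary~7.13]{Ivanov}, and it is not virtually abelian since it contains a non-abelian free group; in particular it contains pseudo-Anosov elements of $\mathcal{MCG}_g^1$, for instance point-pushes along loops that fill $\B\Sigma_g$.

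First I would apply \cite[Theorem~12]{BF} to the subgroup $G=\pi_1(\B\Sigma_g)$ of the mapping class group $\mathcal{MCG}_g^1$. Since $G$ is neither virtually abelian nor reducible, that result produces an infinite-dimensional space $W$ of homogeneous quasi-morphisms on the \emph{ambient} group $\mathcal{MCG}_g^1$ whose restrictions to $G$ remain linearly independent; these are the counting quasi-morphisms attached to pseudo-Anosov elements of $G$, which are defined on all of $\mathcal{MCG}_g^1$ via its natural action on a Gromov-hyperbolic space (the curve complex of the punctured surface). Setting $V:=\{\,\psi|_{\pi_1(\B\Sigma_g)}:\psi\in W\,\}$, we obtain an infinite-dimensional subspace of $Q(\pi_1(\B\Sigma_g))$ every element $\varphi$ of which extends to a homogeneous quasi-morphism $\widehat\varphi$ on $\mathcal{MCG}_g^1$; thus $V$ already satisfies the first defining condition of $Q_{\mathcal{MCG}_g^1}(\pi_1(\B\Sigma_g), S_g)$.

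It remains to impose the second condition, vanishing on the finite set $S_g$ of \eqref{eq:finite-set-S}. The evaluation map $V\to\B R^{\#S_g}$, $\varphi\mapsto(\varphi(s))_{s\in S_g}$, is linear with image of dimension at most $\#S_g$, so its kernel $V_0$ has finite codimension in $V$ and is again infinite-dimensional. Every element of $V_0$ is a homogeneous quasi-morphism on $\pi_1(\B\Sigma_g)$ that extends to $\mathcal{MCG}_g^1$ and vanishes on $S_g$, hence lies in $Q_{\mathcal{MCG}_g^1}(\pi_1(\B\Sigma_g), S_g)$, which is therefore infinite-dimensional. I expect no real obstacle: the whole argument reduces to quoting \cite[Theorem~12]{BF} in the version that also retains the extension to the ambient group, followed by a finite-codimension count in which the set $S_g$ plays no essential role; the only delicate point — that $\pi_1(\B\Sigma_g)$ is non-reducible and not virtually abelian as a subgroup of $\mathcal{MCG}_g^1$ — was settled in the preceding paragraph.
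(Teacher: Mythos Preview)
Your proposal is correct and follows essentially the same approach as the paper: the paper's proof is precisely the paragraph preceding the corollary (verifying that $\pi_1(\B\Sigma_g)$ is non-reducible and not virtually abelian in $\mathcal{MCG}_g^1$) together with the bare citation of \cite[Theorem~12]{BF}. Your version simply makes explicit two points the paper leaves implicit, namely that the Bestvina--Fujiwara counting quasi-morphisms are constructed on the ambient $\mathcal{MCG}_g^1$ (so the extension condition is automatic), and that imposing vanishing on the finite set $S_g$ is a finite-codimension condition.
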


Note that since every non-trivial quasi-morphism in $Q_{\mathcal{MCG}_g^1}(\pi_1(\B\Sigma_g), S_g)$ vanishes on $S_g$, it can not be a homomorphism.  Hence the space $Q_{\mathcal{MCG}_g^1}(\pi_1(\B\Sigma_g), S_g)$ may be viewed as a linear subspace of $\widehat{Q}(\pi_1(\B\Sigma_g))$. Recall that by Corollary \ref{cor:Polt-map-injectivity} the linear map
$$\mathfrak{Polt}_g\colon \widehat{Q}(\pi_1(\B \Sigma_g))\hookrightarrow \widehat{Q}(\Ham(\B \Sigma_g))$$
is injective. Hence the map
$$\mathfrak{Polt}_g\colon Q_{\mathcal{MCG}_g^1}(\pi_1(\B\Sigma_g), S_g)\hookrightarrow \widehat{Q}(\Ham(\B \Sigma_g))$$
is also injective.

Denote by $Q(\Ham(\B \Sigma_g),\Aut)$ the space of homogeneous quasi-morphisms on $\Ham(\B \Sigma_g)$ that vanish on the set
$\OP{Aut}\subset \Ham(\B\Sigma_g)$ of all autonomous diffeomorphisms. Since $Q(\Ham(\B \Sigma_g),\Aut)$ contains no non-trivial homomorphisms, it is viewed as a linear subspace of $\widehat{Q}(\Ham(\B \Sigma_g))$. Now we a ready to state and prove our key proposition.

\begin{prop}\label{P:key-proposition}
The image of the map
$$\mathfrak{Polt}_g\colon Q_{\mathcal{MCG}_g^1}(\pi_1(\B\Sigma_g), S_g)\hookrightarrow \widehat{Q}(\Ham(\B \Sigma_g))$$
lies in the linear space $Q(\Ham(\B \Sigma_g),\Aut)$.
\end{prop}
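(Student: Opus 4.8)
The plan is to show that for any $\varphi\in Q_{\mathcal{MCG}_g^1}(\pi_1(\B\Sigma_g),S_g)$ the induced quasi-morphism $\overline{\Psi}=\mathfrak{Polt}_g(\varphi)$ vanishes on every autonomous diffeomorphism $h\in\OP{Aut}$. By Theorem \ref{T:Morse-Ham} the quasi-morphism $\overline{\Psi}$ is continuous in the $C^1$-topology on generating Hamiltonians, and Morse functions are $C^1$-dense in $C^\infty(\B\Sigma_g)$; hence it suffices to prove $\overline{\Psi}(h)=0$ when $h=h_1$ is the time-one map of the flow of a \emph{Morse} function $H$.

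So fix a Morse $H$ and let $h=h_1$. The key formula is \eqref{eq:braid-morse-aut}: for each $x\in\mathrm{Reg}_H$ and each $p\in\B N$ we have $[h^p_x]=\a'_{p,x}\circ[\g_x]^{k_{h,p}}\circ\a''_{p,x}$ in $\pi_1(\B\Sigma_g,z)$, with the word lengths of $\a'_{p,x},\a''_{p,x}$ bounded by a constant $K$ independent of $H$, $x$, $p$. I would evaluate $\varphi$ on this product. Using that $\varphi$ is a homogeneous quasi-morphism with defect $D_\varphi$ and is bounded on the $K$-ball of $\pi_1(\B\Sigma_g)$ by some $M$ (a ball in the word metric; $\varphi$ is automatically bounded there since it is a quasi-morphism vanishing at the identity, or one simply takes $M=KD_\varphi$), one gets
\begin{equation}\label{eq:phi-estimate}
\bigl|\varphi([h^p_x])-k_{h,p}\,\varphi([\g_x])\bigr|\le 2D_\varphi+2M.
\end{equation}
Now the crucial point: by Proposition \ref{P:finite-conj-classes}, $[\g_x]$ is conjugate in $\mathcal{MCG}_g^1$ to an element of the finite set $S_g$; since $\varphi$ extends to $\widehat{\varphi}\in Q(\mathcal{MCG}_g^1)$, which is a homogeneous quasi-morphism and hence a conjugation-invariant class function on $\mathcal{MCG}_g^1$, we get $\varphi([\g_x])=\widehat{\varphi}([\g_x])=\widehat{\varphi}(s)=0$ for the corresponding $s\in S_g$, because $\varphi$ vanishes on $S_g$. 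Therefore \eqref{eq:phi-estimate} reads $|\varphi([h^p_x])|\le 2D_\varphi+2M$ for all $x\in\mathrm{Reg}_H$ and all $p$.

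Plugging this into the Polterovich averaging formula, for each $p$,
\begin{equation}\label{eq:Psi-pow-bound}
\left|\Psi(h^p)\right|=\left|\int_{\B\Sigma_g}\varphi([h^p_x])\,\o\right|\le\int_{\mathrm{Reg}_H}\bigl|\varphi([h^p_x])\bigr|\,\o\le (2D_\varphi+2M)\cdot\vol(\B\Sigma_g),
\end{equation}
using that $\B\Sigma_g\setminus\mathrm{Reg}_H$ has measure zero. Dividing by $p$ and letting $p\to\infty$ gives $\overline{\Psi}(h)=\lim_p \Psi(h^p)/p=0$. Combined with the $C^1$-continuity reduction above, this shows $\overline{\Psi}$ vanishes on all of $\OP{Aut}$, i.e. $\mathfrak{Polt}_g(\varphi)\in Q(\Ham(\B\Sigma_g),\OP{Aut})$, which is the claim. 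The main obstacle, and the place where the earlier machinery is doing the real work, is justifying that the bound on $\varphi([h^p_x])$ is uniform in $x$ and $p$: this relies on the decomposition \eqref{eq:braid-morse-aut} with its $H,x,p$-independent length bound $K$, together with the finiteness of conjugacy types in Proposition \ref{P:finite-conj-classes} and the extendability of $\varphi$ to $\mathcal{MCG}_g^1$ — without the extension one could not conclude that conjugation in $\mathcal{MCG}_g^1$ (which need not be realized inside $\pi_1(\B\Sigma_g)$) preserves the value of $\varphi$.
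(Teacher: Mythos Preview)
Your proof is correct and follows essentially the same route as the paper's: reduce to Morse Hamiltonians via Theorem~\ref{T:Morse-Ham}, use the decomposition \eqref{eq:braid-morse-aut} together with Proposition~\ref{P:finite-conj-classes} and the $\mathcal{MCG}_g^1$-conjugation invariance of the extended quasi-morphism to get $\varphi([\gamma_x])=0$, and conclude that $|\varphi([h^p_x])|$ is bounded uniformly in $x$ and $p$, forcing $\overline{\Psi}(h)=0$. The only minor slip is the parenthetical claim that one may take $M=KD_\varphi$: that specific bound needs $\varphi$ to vanish on the standard generators $\alpha_i,\beta_i$ (which the paper argues, since these are simple closed curves and hence again $\mathcal{MCG}_g^1$-conjugate into $S_g$), but your first justification---that any quasi-morphism is bounded on a fixed ball in the word metric---is already sufficient and makes the argument go through.
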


\begin{proof}
Let $\psi\in Q_{\mathcal{MCG}_g^1}(\pi_1(\B\Sigma_g), S_g)$ and $h\in \Ham(\B \Sigma_g)$ an autonomous diffeomorphism. We need to show that $\overline{\Psi}(h)=0$, where $\overline{\Psi}=\mathfrak{Polt}_g(\psi)$. Since Morse functions on $\B \Sigma_g$ form a dense subset in the set of all smooth functions in $C^1$-topology \cite{Mil}, by Theorem \ref{T:Morse-Ham} it is enough to show that $\overline{\Psi}(h)=0$, where $h$ is a time-one map of the flow generated by some Morse function $H\colon\B \Sigma_g\to\B R$. Recall that by definition we have
$$\overline{\Psi}(h)=\int\limits_{\B \Sigma_g} \lim_{p\to\infty}\frac{\psi([h^p_x])}{p}\o\thinspace.$$
Since the set $\mathrm{Reg}_H$ is of full measure in $\B \Sigma_g$, it is enough to show that for each $x\in \mathrm{Reg}_H$ the following equality holds $\lim_{p\to\infty}\frac{|\psi([h^p_x])|}{p}=0$.

The group $\pi_1(\B\Sigma_g)$ admits the following presentation
\begin{equation}\label{eq:fund-gp-presentation}
\pi_1(\B\Sigma_g)=\langle\a_i,\b_i|\hspace{2mm} 1\leq i\leq g,\thinspace \prod_{i=1}^g [\a_i,\b_i]=1\rangle .
\end{equation}
For every $\a\in\pi_1(\B\Sigma_g)$ denote by $l(\a)$ the word length of $\a$ with respect to the set of generators given in \eqref{eq:fund-gp-presentation}. Since $\psi\in Q_{\mathcal{MCG}_g^1}(\pi_1(\B\Sigma_g), S_g)$, it is constant on conjugacy classes and thus $\psi(\a_i)=\psi(\b_i)=0$ for each  $1\leq i\leq g$. In addition, for every $\a\in\pi_1(\B\Sigma_g)$ we have $|\psi(\a)|\leq D_{\psi}l(\a)$. It follows from \eqref{eq:braid-morse-aut} that for every $p\in\B N$ and $x\in\mathrm{Reg}_H$ we have
$$
[h^p_x]=\a'_{p,x}\circ[\gamma_{x}]^{k_{h,p}}\circ\a''_{p,x}\thinspace,
$$
where $k_{h,p}$ is an integer which depends only $h$, $p$ and $x$, and $l(\a'_{p,x})\thinspace, l(\a''_{p,x})$ are bounded by some constant $K>0$  independent of $h$, $x$ and $p$. Hence for every $p\in\B N$ and $x\in\mathrm{Reg}_H$ we have
$$0\leq\frac{|\psi([h^p_x])|}{p}\leq\frac{|\psi(\a'_{p,x})|+|k_{h,p}||\psi([\gamma_{x}])|+|\psi(\a''_{p,x})|+2D_{\psi}}{p}\hspace{2mm}.$$
Since $\psi\in Q_{\mathcal{MCG}_g^1}(\pi_1(\B\Sigma_g), S_g)$, by definition it extends to a homogeneous quasi-morphism on $\mathcal{MCG}_g^1$ and vanishes on the set $S_g$. Hence by Proposition \ref{P:finite-conj-classes} we have $\psi([\gamma_{x}])=0$, and hence
$$0\leq\frac{|\psi([h^p_x])|}{p}\leq\frac{2K\cdot D_{\psi}+2D_{\psi}}{p}=\frac{2D_{\psi}(K+1)}{p}\hspace{2mm}.$$
By taking $p\to\infty$ we conclude the proof of the proposition.
\end{proof}

Since the map $\mathfrak{Polt}_g$ is injective and by Corollary \ref{cor:inf-dim-qm-restriction} the linear space $Q_{\mathcal{MCG}_g^1}(\pi_1(\B\Sigma_g), S_g)$ is infinite dimensional, an immediate consequence of Proposition \ref{P:key-proposition} is the following

\begin{cor}\label{cor:inf-dim-aut-qm}
The space $Q(\Ham(\B \Sigma_g),\Aut)$ is infinite dimensional.
\end{cor}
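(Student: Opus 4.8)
The final statement to prove is Corollary~\ref{cor:inf-dim-aut-qm}: the space $Q(\Ham(\B\Sigma_g),\Aut)$ is infinite dimensional. The plan is to deduce this almost immediately from the machinery already assembled, namely the injectivity of $\mathfrak{Polt}_g$, the infinite-dimensionality of $Q_{\mathcal{MCG}_g^1}(\pi_1(\B\Sigma_g), S_g)$ (Corollary~\ref{cor:inf-dim-qm-restriction}), and the containment established in Proposition~\ref{P:key-proposition}.

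First I would recall that by Proposition~\ref{P:key-proposition} the injective linear map $\mathfrak{Polt}_g$ restricts to a linear map
$$
\mathfrak{Polt}_g\colon Q_{\mathcal{MCG}_g^1}(\pi_1(\B\Sigma_g), S_g)\longrightarrow Q(\Ham(\B\Sigma_g),\Aut),
$$
i.e.\ the image of every $\psi$ in the source lands inside $Q(\Ham(\B\Sigma_g),\Aut)$. Next, since $\mathfrak{Polt}_g$ is injective on all of $\widehat{Q}(\pi_1(\B\Sigma_g))$ by Corollary~\ref{cor:Polt-map-injectivity} (or Proposition~\ref{P:qm-injectivity}), it is in particular injective on the subspace $Q_{\mathcal{MCG}_g^1}(\pi_1(\B\Sigma_g), S_g)$. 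An injective linear map cannot decrease dimension, so the dimension of the image is at least the dimension of the domain. By Corollary~\ref{cor:inf-dim-qm-restriction} the domain is infinite dimensional, hence so is the image, and therefore $Q(\Ham(\B\Sigma_g),\Aut)$, which contains that image, is infinite dimensional as well.

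There is essentially no obstacle here: the corollary is a bookkeeping consequence of the three preceding results, and the only thing to be careful about is that all the spaces in sight have genuinely been identified as linear subspaces of $\widehat{Q}$ (which the text has arranged, since none of them contain nontrivial homomorphisms), so that "injective linear map" and "dimension comparison" make literal sense. Concretely: pick a linearly independent infinite family $\{\psi_n\}_{n\in\B N}$ in $Q_{\mathcal{MCG}_g^1}(\pi_1(\B\Sigma_g), S_g)$; then $\{\mathfrak{Polt}_g(\psi_n)\}_{n\in\B N}$ is a linearly independent infinite family in $Q(\Ham(\B\Sigma_g),\Aut)$ by injectivity, which exhibits the infinite-dimensionality directly. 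This suffices.
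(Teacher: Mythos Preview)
Your proof is correct and follows exactly the paper's own approach: the corollary is stated as an immediate consequence of the injectivity of $\mathfrak{Polt}_g$, the infinite-dimensionality of $Q_{\mathcal{MCG}_g^1}(\pi_1(\B\Sigma_g), S_g)$ from Corollary~\ref{cor:inf-dim-qm-restriction}, and Proposition~\ref{P:key-proposition}. Your write-up is, if anything, slightly more detailed than the paper's one-line justification.
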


\begin{rem}\label{R:unbounded}
The existence of a non-trivial
homogeneous quasi-morph\-ism $\overline{\Psi}\colon \Ham(\B\Sigma_g)\to \B R$, which is trivial on the set
$\OP{Aut}\subset \Ham(\B\Sigma_g)$ of all autonomous diffeomorphisms, implies that the autonomous norm is unbounded. Indeed,
for every $f\in \Ham(\B\Sigma_g)$ we have that
$$|\overline{\Psi}(f)|=|\overline{\Psi}(h_1\circ\ldots\circ h_m)|\leq mD_{\overline{\Psi}}$$
and hence for every
natural number $n$ we get $\|f^n\|_{\OP{Aut}}\geq \frac{|\overline{\Psi}(f)|}{D_{\overline{\Psi}}}n>0$,
provided $\overline{\Psi}(f)\neq 0$.
\end{rem}
Remark \ref{R:unbounded} and Corollary \ref{cor:inf-dim-aut-qm} conclude the proof of the theorem.
\qed

\subsection*{Acknowledgments}
The author would like to thank Mladen Bestvina, Danny Calegari, Louis Funar, Sergei Lanzat, Juan Gonzalez-Meneses, Jarek Kedra, Chris Leininger and Dan Margalit for fruitful discussions. Part of this work has been done during the author's stay in Mathematisches For\-schung\-sinstitut Oberwolfach and in Max Planck Institute for Mathematics in Bonn. The author wishes to express his gratitude to both institutes. He was supported by the Oberwolfach Leibniz fellowship and Max Planck Institute research grant.

\bigskip

Department of Mathematics, Vanderbilt University, Nashville, TN\\
\emph{E-mail address:} \verb"michael.brandenbursky@vanderbilt.edu"

Max-Planck-Institut f$\ddot{\textrm{u}}$r Mathematik, 53111 Bonn, Germany\\
\emph{E-mail address:} \verb"brandem@mpim-bonn.mpg.de"


\begin{thebibliography}{99}

\bibitem{AK} Arnol'd V., Khesin B.: \textit{Topological methods in
hydrodynamics,} Applied Mathematical
Sciences 125, (1998).
%

\bibitem{Ba} Banyaga A.:
\textit{Sur la structure du groupe des diffeomorphismes qui preservent une forme symplectique,} (French) Comment. Math. Helv. 53 (1978), no. 2, 174--227.

\bibitem{Ba1} Banyaga A.: \textit{The structure of classical diffeomorphism groups,} volume 400
of Mathematics and its Applications. Kluwer Academic Publishers Group, Dordrecht, 1997.

\bibitem{BF} Bestvina M., Fujiwara K.: \textit{Bounded cohomology of subgroups of mapping class groups}, Geom. Topol. \textbf{6} (2002), 69--89.

\bibitem{Bir1} Birman J.: \textit{Mapping class groups and their relationship to braid groups,} Comm. Pure Appl. Math. \textbf{22} (1969), 213--238.

\bibitem{B-infinity} Brandenbursky M.: \textit{Quasi-morphisms and $L^p$-metrics on groups of volume-preserving diffeomorphisms,}
Journal of Topology and Analysis Vol. 4, No. 2 (2012) 255--270.

\bibitem{BK2} Brandenbursky M., Kedra J.:
\textit{On the autonomous metric on the group of area-preserving diffeomorphisms of the 2-disc}, Algebraic \& Geometric Topology,
\textbf{13} (2013), 795--816.

\bibitem{B-preparation} Brandenbursky M.: \textit{Bi-invariant metrics and quasi-morphisms on groups of Hamiltonian diffeomorphisms of surfaces,} in preparation.

\bibitem{Calegari} Calegari D.: \textit{scl}, MSJ Memoirs 20, Mathematical Society of Japan (2009).

\bibitem{Ivanov} Ivanov N.: \textit{Subgroups of Teichm$\ddot{u}$ller modular groups,} Translations of Mathematical Monographs, Volume 115. American
Mathematical Society, Providence, RI, 1992.

\bibitem{Mil} Milnor J. W.: \textit{Lectures on the h-cobordism theorem,}
 notes by L. Siebenmann and J. Sondow, Princeton University Press, Princeton, NJ, (1965).

\bibitem{P} Polterovich L: \textit{Floer homology, dynamics and groups,} in Morse
theoretic methods in nonlinear analysis and in symplectic topology,
417--438, Springer 2006.



\end{thebibliography}
\end{document}